\documentclass[oneside,english]{amsart}
\usepackage[T1]{fontenc}
\usepackage[latin9]{inputenc}
\usepackage{amstext}
\usepackage{amsthm}
\usepackage{amssymb}

\makeatletter
\numberwithin{equation}{section}
\numberwithin{figure}{section}
\theoremstyle{plain}
\newtheorem{thm}{\protect\theoremname}
\theoremstyle{plain}
\newtheorem{conjecture}[thm]{\protect\conjecturename}
\theoremstyle{plain}
\newtheorem{lem}[thm]{\protect\lemmaname}
\theoremstyle{remark}
\newtheorem{rem}[thm]{\protect\remarkname}
\theoremstyle{plain}
\newtheorem{cor}[thm]{\protect\corollaryname}

\usepackage{fullpage}

\makeatother

\usepackage{babel}
\providecommand{\conjecturename}{Conjecture}
\providecommand{\corollaryname}{Corollary}
\providecommand{\lemmaname}{Lemma}
\providecommand{\remarkname}{Remark}
\providecommand{\theoremname}{Theorem}

\begin{document}

\title{A Inequality for Non-Microstates Free Entropy Dimension for Crossed
Products by Finite Abelian Groups}

\author{Dimitri Shlyakhtenko}
\begin{abstract}
For certain generating sets of the subfactor pair $M\subset M\rtimes G$
where $G$ is a finite abelian group we prove an approximate inequality
between their non-microstates free entropy dimension, resembling the
Shreier formula for ranks of finite index subgroups of free groups.
As an application, we give bounds on free entropy dimension of generating
sets of crossed products of the form $M\rtimes(\mathbb{Z}/2\mathbb{Z})^{\oplus\infty}$
for a large class of algebras $M$. 
\end{abstract}

\address{UCLA Department of Mathematics, UCLA, Los Angeles, CA 90095}

\email{shlyakht@math.ucla.edu}

\dedicatory{To the memory of V.F.R. Jones. }
\maketitle

\section{Introduction.}

The famous Jones index \cite{jones:index} is the von Neumann algebra
extension of the group-subgroup index and is defined for any inclusion
$M_{0}\subset M_{1}$ of II$_{1}$ factors. An open question in von
Neumann algebra theory is to find an analog of the Shreier\textquoteright s
formula for the number of generators of finite-index subgroups of
non-abelian free groups $\mathbb{F}_{n}$. For example, one expects
that the ``number of generators'' of a finite-index subfactor $M_{0}\subset M_{1}=L(\mathbb{F}_{n})$
should be $1+(n-1)[M:N]$. Indeed, specific subfactors of $L(\mathbb{F}_{r})$
constructed via amalgamated free producs \cite{popa:markov,radulescu:subfact,shlyakht-popa:universal,guionnet-jones-shlyakht:freePlanar,guionnet-shlyakht-jones:semifinite}
have sets of generators for which such a formula holds. However, there
is little that is known in general, even for index $2$.

Returning to group theory, let $H\subset G$ be a finite-index inclusion
of groups. Denoting by $\beta_{j}^{2}(G)$ the $L^{2}$-Betti numbers
of $G$ one has the following generalization of Shreier's formula
(see e.g. \cite{luck:book}):
\[
\beta_{j}^{(2)}(G)=[G:H]^{-1}\beta_{j}^{(2)}(H).
\]
(Schreier's formula corresponds to the case $j=1$ and involves the
equality $\beta_{1}^{(2)}(\mathbb{F}_{r})=r-1$). A similar formula
is true for finite-index inclusions of tracial algebras \cite{thom:L2index}. 

Voiculescu's free entropy dimension $\delta_{0}$ \cite{dvv:entropy2,dvv:entropy3}
takes the value $r$ on a set of generators of $L(\mathbb{F}_{r})$;
more generally, its value is related to $L^{2}$ Betti numbers \cite{connes-shlyakht:l2betti,shlyakht-mineyev:freedim,shlyakht:sofic}.
Therefore one expects a statement of the following kind: given a finite
index inclusion $M_{0}\subset M_{1}$, for any generating set $S_{0}$
for $M_{0}$ there exists a generating set for $M_{1}$ (and conversely,
for any generating set $S_{1}$ for $M_{1}$ there exists a generating
set $S_{0}$ for $M_{0}$) so that
\begin{equation}
\delta_{0}(S_{0})-1=[M_{1}:M_{0}]^{-1}(\delta_{0}(S_{1})-1).\label{eq:SchreierFormulaDelta0}
\end{equation}
However, at present only inequalities of the form $\delta_{0}(S_{1})\leq\delta_{0}(S_{0})$
are available \cite{jung:subfactorDimension}. 

In this paper we show that for very specific examples of subfactors,
namely subfactors of the form $M_{0}\subset M_{1}=M_{0}\rtimes G$
with $G$ a finite abelian group, we can find generators $M_{0}$
and $M_{1}$ for which the non-microstates free entropy dimension
\cite{dvv:entropy5} analog of (\ref{eq:SchreierFormulaDelta0}) holds
with an arbitrary small error. More precisely, we prove that given
$\epsilon>0$ there exist generating sets $S_{0}$ for $M_{0}$ and
$S_{1}$ for $M_{1}$ for which
\[
\delta^{\star}(S_{0})-1=[M_{1}:M_{0}]^{-1}(\delta^{\star}(S_{1})-1)+\epsilon.
\]

Our result is interesting in connection with the following question.
Let $H$ be a finitely generated group, and let $\alpha$ be an action
of some infinite group $G$ on $H$. Then it is known \cite[Theorem 6.8]{gaboriau:ell2}
that $\beta_{1}^{(2)}(H\rtimes_{\alpha}G)=0$. In case that $G=\mathbb{Z}$
and $H$ is finitely presented sofic, this implies that the von Neumann
algebra $M_{1}=L(H)\rtimes\mathbb{Z}$ is strongly one-bounded \cite{jung:onebounded,hayes:oneBounded,shlyakht:sofic};
in particular for any generating $S$ set of $M_{1}$, $\delta_{0}(S)=1$.
This leads us to the following conjecture:
\begin{conjecture}
Let $M_{0}$ be a finitely-generated von Neumann algebra, and let
$G$ be an action of a infinite group on $M_{0}$. Then $M_{1}=M_{0}\rtimes_{\alpha}G$
is strongly $1$-bounded.
\end{conjecture}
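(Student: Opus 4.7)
\emph{Proof proposal.} My plan is to combine the paper's approximate Schreier inequality with Gaboriau's vanishing theorem $\beta_{1}^{(2)}(M_{1})=0$ and Hayes' framework of $1$-bounded entropy, partitioning the conjecture into two regimes according to the structure of $G$.

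\emph{Regime A: $G$ is a directed union of finite abelian subgroups $H_{n}$.} I would iterate the main theorem of this paper along the tower $M_{0}\subset M_{0}\rtimes H_{1}\subset M_{0}\rtimes H_{2}\subset\cdots$. Given a finite generating set $S$ of $M_{1}=M_{0}\rtimes G$, a standard $\|\cdot\|_{2}$-approximation places $S$, up to arbitrarily small error, inside $M_{0}\rtimes H_{n}$ for $n$ large. For each $n$ I would apply the paper's theorem to the inclusion $M_{0}\subset M_{0}\rtimes H_{n}$ with $\epsilon_{n}\to 0$; combined with upper semicontinuity of $\delta^{\star}$ under $\|\cdot\|_{2}$-limits and the factor $|H_{n}|^{-1}\to 0$ in the approximate Schreier formula, this should pin $\delta^{\star}(S)\leq 1$.

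\emph{Regime B: general infinite $G$, including torsion-free cases.} The paper's theorem does not apply directly. Here I would invoke Gaboriau's vanishing $\beta_{1}^{(2)}(M_{1})=0$: in the sofic setting this already yields strong $1$-boundedness via Hayes' results, recovering the case $G=\mathbb{Z}$, $M_{0}=L(H)$ sofic, already mentioned in the introduction. For non-sofic $M_{1}$, new techniques would be needed, plausibly a non-microstates analogue of Hayes' $1$-bounded entropy that can consume $\beta_{1}^{(2)}=0$ directly.

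\emph{Main obstacle.} The decisive difficulty is bridging the two regimes: the estimate from Regime A is formulated in terms of the non-microstates dimension $\delta^{\star}$, whereas strong $1$-boundedness is defined via the microstates free entropy and Hayes' $1$-bounded entropy $h$. Since no unconditional comparison between $\delta^{\star}$ and $\delta_{0}$ is known, an upper bound on $\delta^{\star}$ does not automatically produce strong $1$-boundedness, nor does it control $h$. I expect closing this gap to require genuinely new input beyond the free-probabilistic computation with finite abelian groups carried out in this paper; a non-microstates analogue of $h$ that enjoys a Schreier-type behaviour under finite-index inclusions would be the cleanest route, but defining and controlling such an invariant is the point at which the essential new idea must enter.
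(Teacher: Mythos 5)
You should note first that this statement is a \emph{conjecture} which the paper explicitly leaves open (``we are unable to even come close to proving the conjecture''); what the paper actually proves is much weaker: for $G=(\mathbb{Z}/2\mathbb{Z})^{\oplus\infty}$ and $M\cong M_{2\times2}(M)$ it produces, for each $\epsilon>0$, \emph{one} generating set $S$ of $M\rtimes_\alpha G$ with $\delta_{0}(S)\leq1+\epsilon$, which is far short of strong $1$-boundedness (that requires control of Jung--Hayes' $1$-bounded entropy, equivalently of \emph{all} generating sets). Your proposal is likewise not a proof, and you are candid about this; but beyond the obstacle you identify (comparing $\delta^{\star}$ with the microstates quantities), the concrete steps of Regime A would not go through even on their own terms.

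The specific gaps are these. Theorem \ref{thm:nonmicroInequality} bounds $\delta^{\star}$ only for the particular generating set $\lambda X\cup(\lambda\hat{u}_{h})_{h\in\hat{G}}\cup(u_{g})_{g\in G}$ built from generators of the fixed-point algebra, and only after rescaling by a small $\lambda$ depending on $\epsilon$; since the paper cannot even establish the scaling invariance (\ref{eq:lambdaInvariance}) of $\delta^{\star}$, the bound cannot be transferred to an arbitrary generating set $S$ of $M_{1}$, and no upper semicontinuity of $\delta^{\star}$ under $\Vert\cdot\Vert_{2}$-approximation of generators is known --- that kind of robustness is exactly what $1$-bounded entropy provides on the microstates side and is absent for $\delta^{\star}$, so your ``standard approximation'' step has no support. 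Iterating along the tower $M_{0}\subset M_{0}\rtimes H_{n}$ also requires applying the theorem to the inclusions $M_{0}^{H_{n}}\subset M_{0}\subset M_{0}\rtimes H_{n}$, hence a uniform bound on the number of generators of the fixed-point algebras $M_{0}^{H_{n}}$; this is precisely why the paper's application assumes $M\cong M_{2\times2}(M)$ and takes $G=(\mathbb{Z}/2\mathbb{Z})^{\oplus\infty}$, and it is unavailable in the generality of the conjecture. In Regime B, Gaboriau's vanishing theorem is a statement about groups, not about von Neumann algebra crossed products: for a general $M_{1}=M_{0}\rtimes_{\alpha}G$ there is no theorem asserting $\beta_{1}^{(2)}(M_{1})=0$, and even in the group case the passage from $\beta_{1}^{(2)}=0$ to strong $1$-boundedness is known only under soficity and finite-presentation hypotheses, so this regime is simply a restatement of the open problem. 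Finally, as the paper points out, the conjecture implies non-isomorphism of the free group factors, so one should expect any proof to require genuinely new input --- which is in effect the conclusion you reach yourself.
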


If true, the conjecture has a somewhat surprising consequence: it
would imply non-isomorphism of free group factors. Indeed, let $G$
be any infinite group so that $L(G)$ is $R^{\omega}$-embeddable
(e.g. $G=\mathbb{Z}$ or $G$ amenable), and regard $\mathbb{F}_{\infty}$
as the infinite free product of copies of $\mathbb{Z}$ indexed by
$G$. Then $G$ acts on this index set by permutations and thus on
$\mathbb{F}_{\infty}$; call this action $\alpha$. One can easily
see that the resulting semi-direct product is $\mathbb{Z}*G$, corresponding
to the extension $e\to\mathbb{F}_{|G|}\to\mathbb{Z}*G\stackrel{e*id}{\to}G\to e$.
Thus
\[
M_{1}=M_{0}\rtimes_{\alpha}G=L(\mathbb{F}_{\infty}\rtimes_{\alpha}G)=L(\mathbb{Z}*G)\cong L(\mathbb{Z})*L(G).
\]
The latter von Neumann algebra is known to be not strongly $1$-bounded;
in fact it has a generating set whose free entropy dimension is strictly
above $1$. On the other hand, if we were to assume that $L(\mathbb{F}_{\infty})\cong L(\mathbb{F}_{2})$
(or even finitely-generated), our conjecture (for the specific group
$G$) would imply that $M_{1}$ is strongly $1$-bounded and thus
all of its generating sets have free entropy dimension $1$, which
would be a contradiction.

While we are unable to even come close to proving the conjecture,
we are able to show that if $M_{0}$ is finitely-generated and in
addition $M_{0}\cong M_{2\times2}(M_{0})$ and $G=(\mathbb{Z}/2\mathbb{Z})^{\oplus\infty}$
then for any $\epsilon>0$, $M_{1}$ has generating sets with free
entropy dimension bounded by $1+\epsilon$. The proof is reminiscent
of Gaboriau's proof \cite{gaboriau:cost} and it is this connection
that inspired us to study the behavior of free entropy dimension under
crossed products.

\subsection*{Acknowledgements. }

The author is grateful to B. Hayes for showing him the reference \cite{gaboriau:cost}
and for useful discussions. This research was supported by NSF grant
DMS-2054450.

\section{Estimates on non-microstates free entropy dimension.}

\subsection{Special generators for crossed product subfactors.\label{subsec:SpecialGenerators}}

Let $M$ be a II$_{1}$ factor and let $\alpha$ be a properly outer
action of a finite abelian group $G$ on $M$. Consider the inclusion
of factors
\[
M^{G}\subset M\subset M\rtimes_{\alpha}G
\]
where $M^{G}$ is the fixed point algebra for the action $\alpha$.
It follows from Takesaki duality that $M\rtimes_{\alpha}G\cong M_{|G|\times|G|}(M^{G})$
, and moreover that the inclusion $M^{G}\subset M$ is isomorphic
to an inclusion of the form $M^{G}\subset M^{G}\rtimes_{\alpha'}\hat{G}$,
where $\hat{G}$ is the group dual of $G$ and $\alpha'$ is a certain
action related to the dual action of $\hat{G}$ on $M\rtimes_{\alpha}G$. 

Assume now that $M$ is finitely generated; therefore also $M\rtimes_{\alpha}G$
is finitely generated. Since $M_{|G|\times|G|}(M^{G})\cong M\rtimes_{\alpha}G$
we also know that $M^{G}$ is finitely generated. Let $X=(X_{1},\dots,X_{d})$
be a set of generators for $M^{G}$. Denote by $\hat{u}_{g}\in M^{G}\rtimes\hat{G}$,
$g\in\hat{G}$, the unitaries implementing $\alpha'$. Using the isomorphism
$(M^{G}\subset M)\subset(M^{G}\subset M^{G}\rtimes_{\alpha'}\hat{G})$,
we may view these unitaries as elements of $M$. Then the set $X\cup(\hat{u}_{g})_{g\in\hat{G}}$
generates $M$. Furthermore, if we denote by $u_{g}\in M\rtimes_{\alpha}G$,
$g\in G$, the unitaries implementing $\alpha$, then $X\cup(\hat{u}_{g})_{g\in\hat{G}}\cup(u_{g})_{g\in G}$
form a generating set of $M\rtimes_{\alpha}G$. Note that we have
the following relations:
\begin{align*}
u_{g}X_{j}u_{g}^{*} & =X_{j},\qquad g\in G,\ j=1,\dots,d\\
u_{g}v_{h}u_{g}^{*} & =\langle g,h\rangle v_{h},\qquad g\in G,\ h\in\hat{G}
\end{align*}
where we use $\langle\cdot,\cdot\rangle$ to denote the pairing between
the elements of $G$ and its dual $\hat{G}$. 

\subsection{Estimates on non-microstates free entropy dimension $\delta^{\star}$.}

In this paper it will be convenient to work with a non-selfadjoint
version of free entropy dimension. Given a non-commutative random
variables $Y$ in a tracial von Neumann algebra $M$ and a sub-algebra
$B\subset M$ , let $A=*\operatorname{-alg}(Y,B)$ and let $C$ be
a circular element free from $A$; we normalize $C$ so that $\tau(C^{*}C)=2$.
Consider the derivation $\partial_{Y}:A\to\textrm{span}\{ACA+AC^{*}A\}$
determined by the Leibniz rule and by $\partial_{Y}(Y)=C$, $\partial_{Y}(Y^{*})=C^{*}$,
$\partial_{Y}(b)=0$ for all $b\in B$. The vector $J(Y:B)\in L^{2}(A,\tau)$,
if it exists, is called the conjugate variable to $Y$ and is uniquely
determined by
\[
\langle J(Y:B),P\rangle_{L^{2}(A)}=\langle C^{*},P\rangle_{L^{2}(A*W^{*}(C))},\qquad\forall P\in A.
\]
Given $Y_{i}:i\in I$ the free entropy dimension is then determined
by
\[
\delta^{\star}(Y_{i}:i\in I)=2|I|-\limsup_{t\to0}t\Vert J(Y_{i}:(Y_{j}:j\in I\setminus\{i\})\Vert_{2}^{2}.
\]
It is not hard to see that our definition is equivalent to the usual
definition of $\delta^{\star}$ for self-adjoint variables, in that
$\delta^{\star}(Y_{i}:i\in I)=\delta_{s.a.}^{\star}(\textrm{Re}Y_{i},\textrm{Im}Y_{i}:i\in I)$.
\begin{lem}
\label{lem:compenentsEqual}Suppose that $\alpha$ is an action of
a finite abelian group on a tracial von Neumann algebra $M$, and
suppose that $Y_{j}\in M$, $j\in I$ are (not necessarily self-adjoint)
generators of $M$. Let $C_{j}^{(g)}$, $g\in G$ be circular elements
$*$-free from $M$, and extend $\alpha$ to $W^{*}(Y_{i}:i\in I)*W^{*}(C_{i}^{(g)}:i\in I,g\in G)$
by setting $\alpha_{g}(C_{i}^{(g')})=C_{i}^{(gg')}$. Let finally
$Y_{j}^{t}=Y_{j}+\sqrt{t}C_{j}^{(e)}$, where $e\in G$ is the neutral
element. Let
\[
\xi_{j}^{t}=J(Y_{j}^{t}:(Y_{i}^{t}:i\in I\setminus\{j\}))
\]
be the free conjugate variables.

For each $h\in\hat{G}$, denote by $(\bar{C}_{j}^{(h)})^{*}$ the
projection of $(C_{j}^{(e)})^{*}$ on to the linear subspace of $\operatorname{span}\{\alpha_{g}((C_{j}^{(e)})^{*}):g\in G\}$
consisting of vectors $x$ satisfying $\alpha_{g}(x)=\langle g,h\rangle x$. 

Then
\begin{equation}
\xi_{j}^{t}=|G|^{1/2}t^{-1/2}E_{W^{*}(Y_{j}^{t}:j\in I)}((\bar{C}_{j}^{(h)})^{*}),\qquad\forall h\in\hat{G}\label{eq:expectationCh}
\end{equation}
and also
\[
\langle\xi_{j}^{t},(\bar{C}_{j}^{(h)})^{*}\rangle=\frac{1}{|G|^{1/2}}\langle\xi_{j}^{t},(C_{j}^{(e)})^{*}\rangle=\frac{t^{1/2}}{|G|^{1/2}}\Vert\xi_{j}^{t}\Vert_{2}^{2},\qquad\forall j\in I,\ h\in\hat{G.}
\]
In particular, if we denote by $\bar{\xi}_{j}^{t,(h)}$ the projection
of $\xi_{j}^{t}$ onto the subspace of $\operatorname{span}\{\alpha_{g}(\xi_{j}^{t}):g\in G\}$
consisting of vectors $x$ satisfying $\alpha_{g}(x)=\langle g,h\rangle x$,
then
\begin{align*}
\Vert\bar{\xi}_{j}^{t,(h)}\Vert_{2}^{2} & =\frac{1}{|G|}\Vert\xi_{j}^{t}\Vert^{2},\qquad j\in I,\ h\in\hat{G}\\
\langle\bar{\xi}_{j}^{t,(h)},(C_{j}^{(e)})^{*}\rangle & =\frac{t^{1/2}}{|G|^{1/2}}\Vert\xi_{j}^{t}\Vert_{2}^{2}.
\end{align*}
so that all of the orthogonal components $\bar{\xi}_{j}^{t,(h)}$
in the decomposition $\xi_{j}^{t}=\sum_{h\in\hat{G}}\bar{\xi}_{j}^{t,(h)}$
have the same length and the same inner product with $(C_{j}^{(e)})^{*}$.
(It is worth noting that $\bar{\xi}_{j}^{t,(h)}\notin W^{*}(Y_{j}^{t}:j\in I)$
since that algebra is not invariant under the action $\alpha$.)
\end{lem}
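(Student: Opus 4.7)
The plan is to first establish the key formula
\[
\xi_j^t \;=\; t^{-1/2}\,E_A\bigl[(C_j^{(e)})^*\bigr], \qquad A := W^*(Y_i^t : i \in I),
\]
via the semicircular-perturbation identity for the non-microstates conjugate variable. Because $Y_j^t = Y_j + \sqrt{t}\,C_j^{(e)}$ with $C_j^{(e)}$ $*$-free from the other variables, the chain rule gives $\partial_{Y_j^t}P = t^{-1/2}\,\partial_{C_j^{(e)}}P$ for $P\in A$; combined with the fact that $(C_j^{(e)})^*$ is the conjugate variable of $C_j^{(e)}$ in the ambient free-product algebra (using $\tau(C^*C)=2$), the inner-product characterization $\langle\xi_j^t,P\rangle_{L^2(A)} = t^{-1/2}\langle(C_j^{(e)})^*,P\rangle$ for $P\in A$ falls out, which is equivalent to the displayed formula. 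This step is the cornerstone on which everything else rests.

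The next step is a short freeness calculation. The algebra $A$ sits inside $\mathcal{N} := M * W^*(C_i^{(e)}:i\in I)$, and each $(C_j^{(g)})^*$ with $g\neq e$ is $*$-free from $\mathcal{N}$, hence $E_A[(C_j^{(g)})^*] = 0$. Substituting this into the expansion $(\bar C_j^{(h)})^* = \tfrac{1}{|G|}\sum_{g}\overline{\langle g,h\rangle}(C_j^{(g)})^*$ leaves only the $g=e$ term, so $E_A[(\bar C_j^{(h)})^*] = \tfrac{1}{|G|}E_A[(C_j^{(e)})^*]$, independent of $h$. Combined with step~1, this yields (\ref{eq:expectationCh}) up to the stated normalization constants. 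The inner-product identities $\langle\xi_j^t,(C_j^{(e)})^*\rangle = t^{1/2}\|\xi_j^t\|_2^2$ (and the analogous identity with $(\bar C_j^{(h)})^*$) then follow by replacing those elements with their $E_A$-projections before pairing against $\xi_j^t\in A$ and invoking step~1 once more.

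The final assertion about the isotypic components reduces, via the expansion
\[
\|\bar\xi_j^{t,(h)}\|_2^2 \;=\; \tfrac{1}{|G|}\sum_{k\in G}\overline{\langle k,h\rangle}\,\langle\alpha_k(\xi_j^t),\xi_j^t\rangle
\]
and Fourier inversion on $G$, to the single orthogonality relation $\langle\alpha_k(\xi_j^t),\xi_j^t\rangle = 0$ for $k\neq e$. Rewriting the pairing via step~1 as $t^{-1}\langle E_{\alpha_k A}[(C_j^{(k)})^*],E_A[(C_j^{(e)})^*]\rangle$ and invoking the amalgamated freeness of $\mathcal{N}$ and $\alpha_k\mathcal{N}$ over $M$ (which holds because the subalgebras $W^*(C_i^{(g)})$ for varying $g$ are mutually free and free from $M$), the tower property collapses one of the projections to $E_M$, and the remaining $M$-valued trace can be shown to vanish by the standard alternating-chaos calculus for a free product, using that $(C_j^{(k)})^*$ is trace-zero and free from $M$.

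The main obstacle I foresee is this last orthogonality statement; all the earlier pieces are essentially bookkeeping around the formula $\xi_j^t = t^{-1/2}E_A[(C_j^{(e)})^*]$ together with the $*$-freeness of $(C_j^{(g)})^*$ from $\mathcal{N}$ for $g\neq e$, whereas step~3 requires a careful unwinding of the amalgamated free product structure and the alternating-chaos decomposition of $E_A[(C_j^{(e)})^*]$ with respect to $M$ and $W^*(C_i^{(e)})$.
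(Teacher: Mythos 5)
Your steps 1 and 2 are correct, and they give a cleaner route to (\ref{eq:expectationCh}) than the paper's own proof: the paper decomposes $C_j^{(e)}=\sum_{h\in\hat G}\bar C_j^{(h)}$ into a $*$-free circular family of variance $|G|^{-1}$ (realized via creation operators) and applies the conditional-expectation formula for conjugate variables to each component separately, whereas you only need $\xi_j^t=t^{-1/2}E_{M_t}((C_j^{(e)})^*)$ (with $M_t:=W^*(Y_i^t:i\in I)$), the vanishing $E_{M_t}((C_j^{(g)})^*)=0$ for $g\neq e$, and linearity. Note that your computation forces the constant in (\ref{eq:expectationCh}) to be $|G|$ rather than $|G|^{1/2}$ (summing $E_{M_t}((\bar C_j^{(h)})^*)$ over $h$ and comparing with $E_{M_t}((C_j^{(e)})^*)=t^{1/2}\xi_j^t$ shows this is the only consistent value), so ``up to the stated normalization constants'' hides an actual correction to the statement; with that correction the inner-product identities follow exactly as you say, and $\langle\bar\xi_j^{t,(h)},(C_j^{(e)})^*\rangle=\langle\xi_j^t,(\bar C_j^{(h)})^*\rangle$ because the isotypic projection $P_h=\frac1{|G|}\sum_g\overline{\langle g,h\rangle}\alpha_g$ is self-adjoint.

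The genuine gap is your step 3, and it cannot be closed: the orthogonality $\langle\alpha_k(\xi_j^t),\xi_j^t\rangle=0$ for $k\neq e$ is false in general. Your reduction by freeness with amalgamation over $M$ is fine and gives $\langle\alpha_k(\xi_j^t),\xi_j^t\rangle=\langle\alpha_k(E_M(\xi_j^t)),E_M(\xi_j^t)\rangle$, but the vanishing you then need amounts to $E_M(\xi_j^t)=t^{-1/2}E_M E_{M_t}((C_j^{(e)})^*)=0$, and freeness only kills $E_M((C_j^{(e)})^*)$; since $M\not\subset M_t$, the composition $E_M E_{M_t}$ is not $E_M$, and $E_M(\xi_j^t)$ is generically nonzero. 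Concretely, take $I$ a singleton, $Y$ a standard circular generator of $M=W^*(Y)$, and $G=\mathbb{Z}/2\mathbb{Z}=\{e,k\}$ acting by $Y\mapsto -Y$ and swapping $C^{(e)},C^{(k)}$. Then $E_{M_t}((C^{(e)})^*)=\frac{\sqrt t}{1+t}(Y+\sqrt t C^{(e)})^*$, so $\xi^t=\frac1{1+t}(Y+\sqrt tC^{(e)})^*$, $E_M(\xi^t)=\frac1{1+t}Y^*\neq0$, $\langle\alpha_k(\xi^t),\xi^t\rangle=-\frac{2}{(1+t)^2}$, and the two isotypic components of $\xi^t$ have $\Vert\cdot\Vert_2^2$ equal to $t/(1+t)^2$ and $(2+t)/(1+t)^2$, not $\frac1{|G|}\Vert\xi^t\Vert_2^2=\frac1{1+t}$. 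So the equal-norm assertion is not merely hard to prove by your route --- under the natural reading it fails, and the paper itself offers no argument for it beyond ``readily implies.'' The parts of the Lemma actually used later in the paper, namely (\ref{eq:expectationCh}) (with the corrected constant) and the inner-product identities, are exactly what your steps 1--2 do establish.
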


\begin{proof}
By \cite{shlyakht:amalg} we may assume that there exists a family
of free creation operators $\ell_{j}^{(g)}$, $\hat{\ell}_{j}^{(g)}$
satisfying, for all $g,g'\in G$, $j,j'\in I$, $y\in M$:
\begin{align*}
(\ell_{j}^{(g)})^{*}y\ell_{j'}^{(g')} & =(\hat{\ell}^{(g)})^{*}y\hat{\ell}_{j'}^{(g')}=\delta_{g=g'}\delta_{j=j'}\tau(y)\\
(\hat{\ell}_{j}^{(g)})^{*}y\ell_{j'}^{(g')} & =(\ell^{(g)})^{*}y\hat{\ell}_{j'}^{(g')}=0
\end{align*}
and so that $C_{j}^{(g)}=\ell_{j}^{(g)}+(\hat{\ell}_{j}^{(g)})^{*}$.
The action $\alpha$ can be extended by putting $\alpha_{g}(\ell_{j}^{(g')})=\ell_{j}^{(gg')}$
and $\alpha_{g}(\hat{\ell}_{j}^{(g')})=\hat{\ell}_{j}^{(g^{-1}g')}$.
Denote by $\bar{\ell}_{j}^{(h)}$ (resp., $\bar{\hat{\ell}}_{j}^{(h)}$)
the projection of $\ell_{j}^{(h)}$ (resp., $\hat{\ell}_{j}^{(h)}$)
onto the linear subspace of $\operatorname{span}\{\hat{\ell}_{j}^{(g)}:g\in G\}$
(resp., $\operatorname{span}\{\hat{\ell}_{j}^{(g)}:g\in G\}$) consisting
of vectors $x$ satisfying $\alpha_{g}(x)=\langle g,h\rangle x$;
in this way we get that $\bar{C}_{j}^{(h)}=\bar{\ell}_{j}^{(h)}+(\bar{\hat{\ell}}_{j}^{(h)})^{*}$.
We can now verify the following equations:
\begin{align*}
(\bar{\ell}_{j}^{(h)})^{*}y\bar{\ell}_{j'}^{(h')} & =(\hat{\bar{\ell}}^{(h)})^{*}y\hat{\bar{\ell}}_{j'}^{(h')}=|G|^{-1}\delta_{h=h'}\delta_{j=j'}\tau(y)\\
(\hat{\bar{\ell}}_{j}^{(h)})^{*}y\bar{\ell}_{j'}^{(h')} & =(\bar{\ell}^{(h)})^{*}y\hat{\bar{\ell}}_{j'}^{(h')}=0
\end{align*}
From this it follows that $(\bar{C}_{j}^{(h)}:j\in I,h\in\hat{G})$
is a family of free circular operators (of variance $|G|^{-1}$) which
are free from $M$. Since for each $j\in I$ , $C_{j}^{(e)}=\sum_{h\in\hat{G}}\bar{C}_{j}^{(h)}$,
it follows from \cite{dvv:entropy5} and the equalities $J(C_{j})=C_{j}^{*}$,
$J(\bar{C}_{j}^{(h)})=|G|^{-1/2}(\bar{C}_{j}^{(h)})^{*}$ that
\begin{align*}
\xi_{j}^{t} & =J(Y_{j}+\sqrt{t}C_{j}^{(e)}:(Y_{i}^{t}:i\in I\setminus\{j\}))\\
 & =J((Y_{j}+\sqrt{t}\sum_{h'\neq h}\bar{C}_{j}^{(h')})+\sqrt{t}\bar{C}_{j}^{(h)}:(Y_{i}^{t}:i\in I\setminus\{j\}))\\
 & =E_{W^{*}(Y_{j}^{t}:j\in I)}J(\sqrt{t}\bar{C}_{j}^{(h)}:(Y_{j}+\sqrt{t}\sum_{h'\neq h}\bar{C}_{j}^{(h')})\cup(Y_{i}^{t}:i\in I\setminus\{j\}))\\
 & =|G|^{1/2}t^{-1/2}E_{W^{*}(Y_{j}^{t}:j\in I)}((\bar{C}_{j}^{(h)})^{*}),
\end{align*}
the last equality by freeness. This gives (\ref{eq:expectationCh}).
On the other hand,
\[
\xi_{j}^{t}=t^{-1/2}E_{W^{*}(Y_{j}^{t}:j\in I)}(C_{j}^{(e)}).
\]
It follows that
\begin{align*}
\langle\xi_{j}^{t},(C_{j}^{(e)})^{*}\rangle & =\langle\xi_{j}^{t},E_{W^{*}(Y_{j}^{t}:j\in I)}((C_{j}^{(e)})^{*})\rangle\\
 & =t^{1/2}\langle\xi_{j}^{t},\xi_{j}^{t}\rangle\\
 & =|G|^{-1/2}\langle\xi_{j}^{t},E_{W^{*}(Y_{j}^{t}:j\in I)}((\bar{C}_{j}^{(h)})^{*})\rangle\\
 & =|G|^{-1/2}\langle\xi_{j}^{t},(\bar{C}_{j}^{(h)})^{*}\rangle,
\end{align*}
which readily implies the remaining statements of the Lemma.
\end{proof}
\begin{thm}
\label{thm:nonmicroInequality}Let $X$ be an arbitrary generating
set for $M^{G}$, and let $X\cup(\hat{u}_{g})_{g\in\hat{G}}$ be the
generating set for $M$ and $X\cup(\hat{u}_{g})_{g\in\hat{G}}\cup(u_{g})_{g\in G}$
be the generating set for $M\rtimes_{\alpha}G$ as constructed in
$\S$\ref{subsec:SpecialGenerators}. Then for any $\epsilon>0$ there
exists a $\lambda>0$ so that
\begin{equation}
\delta^{\star}((\lambda X_{i}:i\in I)\cup(\lambda\hat{u}_{h}:h\in\hat{G})\cup(u_{g}:g\in G)-1\leq|G|^{-1}(\delta^{\star}(Y_{i}:i\in I)-1)+\epsilon.\label{eq:mainInequality}
\end{equation}
\end{thm}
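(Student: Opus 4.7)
The plan is to upper bound $\delta^\star(S_1)$ for $S_1 = (\lambda X_i)\cup(\lambda\hat u_h)\cup(u_g)$ by producing a sharp lower bound on the free Fisher information
\[
\Phi^\star(S_1) = \limsup_{t\to 0} t\sum_{Z\in S_1}\|J(Z^t:\text{others})\|_2^2,
\]
and then use the identity $\delta^\star(S_1) = 2|S_1|-\Phi^\star(S_1)$ together with the analogous identity for $\delta^\star(Y_i:i\in I)$. Reorganizing the target inequality, the goal reduces to a lower bound on $\Phi^\star(S_1)$ of the form $|G|^{-1}\Phi^\star(Y_i:i\in I)$ plus two nonnegative pieces: one intrinsic to the $|G|$ generators $u_g$ (which generate the finite-dimensional subalgebra $L(G)\cong\mathbb{C}^{|G|}$), and one capturing the extra free directions made available in $M_1$ once the isotype constraint on each $\lambda Y_j$ is accounted for.

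For the $u_g$-contribution, I would perturb $u_g\mapsto u_g + \sqrt{t}\,D_g$ with independent circulars $D_g$ free from $M_1$. Using that $L(G)\subset M_1$ and the standard value $\delta^\star(u_g:g\in G) = 1-|G|^{-1}$ for a generating system of the group algebra of an abelian group of order $|G|$, a conditional-expectation argument (comparing the $M_1$-side projection with the intrinsic $L(G)$-side projection) produces a lower bound on the $u_g$-part of $\Phi^\star(S_1)$ of $2|G|-1+|G|^{-1}$.

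The $\lambda Y_j$-contribution is the heart of the argument, and Lemma~\ref{lem:compenentsEqual} enters as follows. Realize the free circulars $C_j^{(g)}$ from the lemma on free Fock space so that the $\alpha$-action extends by shift, and take the $M_1$-perturbation of $\lambda Y_j$ to be $\sqrt{t}\,\lambda C_j^{(e)}$. The $M$-side conjugate variables $\xi_j^t$ given by the lemma have equal-norm isotypic components $\bar\xi_j^{t,(h)} = |G|^{-1}\sum_g\overline{\langle g,h\rangle}\,u_g\xi_j^t u_g^*$, each of squared norm $|G|^{-1}\|\xi_j^t\|_2^2$. The critical observation is that these components live in the crossed-product algebra $M_1 * W^*(C)$ (not merely in $M * W^*(C)$), because the averaging uses the implementing unitaries $u_g\in M_1$; hence they are candidate test vectors for the $M_1$-conjugate variable $\tilde\xi_j^t$ of $\lambda Y_j^t$. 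Pairing $\tilde\xi_j^t$ against $\bar\xi_j^{t,(h_j)}$, where $h_j$ is the $\alpha$-character of $Y_j$ (so $h_j = e$ for $Y_j = X_i$ and $h_j = h$ for $Y_j = \hat u_h$), and invoking the inner-product formula from Lemma~\ref{lem:compenentsEqual} transfers the factor $|G|^{-1}$ from the $M$-side Fisher information into the $M_1$-side; the remaining $|G|-1$ isotypes $h\neq h_j$, being $\alpha$-twisted-free from $\lambda Y_j$ inside $M_1$, deliver the additional free contribution that saturates the lower bound.

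The main obstacle is executing the test-vector computation rigorously: the natural perturbing circular $\bar C_j^{(h_j)}$ inside the $M_1$-setting is not free from $M_1$ (it carries the character $h_j$, so $u_g\bar C_j^{(h_j)}u_g^* = \langle g,h_j\rangle \bar C_j^{(h_j)}$), and the isotypic components of $\xi_j^t$ must be approximated inside the perturbed $M_1$-algebra and not just inside the ambient free product. The scale factor $\lambda$ plays exactly this stabilizing role: by taking $\lambda$ small, the cross-terms coming from the non-freeness of $\bar C_j^{(h_j)}$ with respect to $M_1$, as well as the error in approximating $\bar\xi_j^{t,(h_j)}$ by an element of the perturbed $M_1$-algebra, contribute only $O(\lambda^2)$ in the $t\to 0$ limit and are absorbed into the $\epsilon$ of the statement. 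Combining the $u_g$-contribution, the principal $|G|^{-1}$-scaled contribution, and the auxiliary free-isotype contribution, then converting via $\delta^\star = 2|S_1|-\Phi^\star$, yields \eqref{eq:mainInequality}.
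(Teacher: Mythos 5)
Your overall architecture matches the paper's: lower-bound the Fisher information of the enlarged family, split off the $u_g$-block via monotonicity and $\delta^\star(u_g:g\in G)=1-|G|^{-1}$, decompose each $C_j$ into $\hat G$-isotypic pieces $\bar C_j^{(h)}$, use Lemma \ref{lem:compenentsEqual} to transfer a factor $|G|^{-1}$ of the $M$-side Fisher information through one isotype, and let the other $|G|-1$ isotypes supply roughly $2|G|^{-1}$ each, with $\lambda$ absorbing the error. But the two steps you treat as routine are exactly where the content lies, and as written they do not go through. First, the contribution of the non-principal isotypes $h\neq\omega_j$ is not a consequence of any (twisted) freeness of $\bar C_j^{(h)}$ from $\lambda Y_j$ inside $M_1$; if $\bar C_j^{(h)}$ behaved freely with respect to the perturbed algebra $\hat M_t=W^*(\lambda Y_i+\sqrt t C_i,\,u_g+\sqrt t C'_g)$, its projection onto $L^2(\hat M_t)$ would be \emph{small} and would contribute nothing, killing the bound. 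What is actually needed (and what the paper proves) is the opposite: the perturbed covariance relation $(u_g+\sqrt t C'_g)(\lambda Y_j+\sqrt t C_j)(u_g+\sqrt t C'_g)^*\in\hat M_t$, compared with $\omega_j(g)(\lambda Y_j+\sqrt t C_j)\in\hat M_t$, forces a specific combination $(\bar C')_g^{(h\omega_j)}Y_ju_g^*-\cdots+(\langle h,g\rangle-\omega_j(g))\bar C_j^{(h)}$ to lie within $O(t^{1/2})$ of $\hat M_t$, so that $\Vert E_{\hat M_t}(\bar C_j^{(h)})\Vert_2^2\geq 2|G|^{-1}\frac{|\omega_j(g)-\langle h,g\rangle|^2}{|\omega_j(g)-\langle h,g\rangle|^2+4\Vert\lambda Y_j\Vert^2}+O(t^{1/2})$. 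This is precisely where the scaling $\lambda$ enters: it improves the overlap ratio $(1+\lambda^2\kappa)^{-1}$ for these isotypes, not the principal one. Your proposal never derives any such membership statement from the relations, so the term $2|I|(|G|-1)|G|^{-1}$ in your lower bound is unjustified.

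Second, your test vector for the principal pairing is problematic. You pair the $M_1$-side conjugate variable against $\bar\xi_j^{t,(h_j)}=|G|^{-1}\sum_g\overline{\langle g,h_j\rangle}\,u_g\xi_j^t u_g^*$ and claim its distance to the perturbed algebra is $O(\lambda^2)$ in the $t\to0$ limit. But $\Vert\xi_j^t\Vert_2\sim t^{-1/2}$, so replacing each $u_g$ by $u_g+\sqrt t C'_g\in\hat M_t$ costs $O(\sqrt t\,\Vert\xi_j^t\Vert_2)=O(1)$ in $\Vert\cdot\Vert_2$, uniformly in $\lambda$; these errors are of the same order as the quantity you are trying to bound and cannot be absorbed into $\epsilon$ by shrinking $\lambda$. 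The paper avoids this by never conjugating the unbounded-in-$t$ vector $\xi_j^t$: it pairs $\zeta_j^t$ with the uniformly bounded vector $(\bar C_j^{(h)})^*$, moves $E_{\hat M_t}$ onto it, replaces $u_g$ by $u_g+\sqrt t C'_g$ there (now a genuine $O(t^{1/2})$ error), and then uses $\Vert E_{\hat M_t}((\bar C_j^{(h)})^*)\Vert_2\geq\Vert E_{M_t}((\bar C_j^{(h)})^*)\Vert_2$ together with the identity $\Vert E_{M_t}((\bar C_j^{(h)})^*)\Vert_2^2=t|G|^{-1}\Vert\xi_j^t\Vert_2^2$ from Lemma \ref{lem:compenentsEqual}. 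Your $u_g$-block estimate and the final bookkeeping are fine, but without the relation-driven capture of the non-principal isotypes and a boundedness-respecting choice of test vectors, the proof does not close.
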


\begin{proof}
Let $I=\{1,\dots,d\}\sqcup\hat{G}$; for $i\in I$ set $Y_{i}=X_{i}$
if $i\in\{1,\dots,d\}$ and $Y_{i}=u_{h}$ if $i=h\in\hat{G}$. Let
also $\omega_{i}:G\to\mathbb{C}$ be given by $\omega_{i}(g)=1$ if
$i\in\{1,\dots,d\}$ and $\omega_{i}(g)=\langle g,h\rangle$ if $i=h\in\hat{G}$.
We then have the following relations:
\begin{align*}
u_{g}Y_{i}u_{g}^{*} & =\omega_{i}(g)Y_{i},\qquad i\in I,\ g\in G.
\end{align*}
For $i\in I$ let $C_{i}$ be a circular system, free from $W^{*}(Y_{i}:i\in I,u_{g}:g\in G)$,
and for $g\in G$ let $C'_{g}$ be another circular system free from
$W^{*}(Y_{i},C_{i}:i\in I,u_{g}:g\in G)$. We then have:
\begin{align*}
\delta^{\star}(X\cup(\hat{u}_{h})_{h\in\hat{G}}\cup(u_{g})_{g\in G})=2(d+|\hat{G}| & +|G|)\\
-\limsup_{t\to0}t\Bigg[\sum_{j\in I}\Vert J(Y_{j}+\sqrt{t}C_{j}) & :(Y_{i}:i\in I\setminus\{j\})\cup(u_{g}+\sqrt{t}C_{g}':g\in G\})\Vert_{2}^{2}\\
+\sum_{g\in G}\Vert J(u_{g}+\sqrt{t}C'_{g} & :(Y_{i}:i\in I\setminus\{j\})\cup(u_{g'}+\sqrt{t}C_{g'}:g'\in G\setminus\{g\})\Vert_{2}^{2}\Bigg].
\end{align*}

Denote by $M_{t}$ the von Neumann algebra $W^{*}(Y_{j}+\sqrt{t}C_{j}:j\in I)$
and by $\hat{M}_{t}$ the von Neumann algebra $W^{*}(M_{t},u_{g}:g\in G)$.

Using \cite{dvv:entropy5} we note that
\begin{align*}
\Vert J(u_{g}+\sqrt{t}C'_{g} & :(Y_{i}:i\in I\setminus\{j\})\cup(u_{g'}+\sqrt{t}C_{g'}:g'\in G\setminus\{g\})\Vert_{2}^{2}\\
 & \geq\Vert J(u_{g}+\sqrt{t}C'_{g}:(u_{g}'+\sqrt{t}C_{g'}:g'\in G\setminus\{g\})\Vert_{2}^{2}
\end{align*}
and by \cite{shlyakht-mineyev:freedim} we get that
\begin{align*}
2|G|-\limsup_{t\to0}t\Vert J(u_{g}+\sqrt{t}C'_{g} & :(u_{g}'+\sqrt{t}C_{g'}:g'\in G\setminus\{g\})\Vert_{2}^{2}\\
 & =\delta^{*}(u_{g}:g\in G)=\beta_{1}^{(2)}(G)-\beta_{0}^{(2)}(G)+1=1-|G|^{-1}.
\end{align*}

Let $\zeta_{j}^{t}=J(Y_{j}+\sqrt{t}C_{j}:(Y_{i}:i\in I\setminus\{j\})\cup(u_{g}+\sqrt{t}C_{g}':g\in G\})$
and set $\xi_{j}^{t}=J(Y_{j}+\sqrt{t}C_{j}:(Y_{i}:i\in I\setminus\{j\}))$.
For $h\in\hat{G}$, denote by $\bar{C}_{j}^{(h)}$ the projection
of $C_{j}$ onto the subspace of $\operatorname{span}\{u_{g}C_{j}u_{g}^{*}:g\in G\}$
consisting of vectors $x$ so that $\{u_{g}(x)u_{g}^{*}=\langle g,h\rangle x\}$.
Let also
\[
\bar{\zeta}_{j}^{(h)}=t^{-1/2}E_{\hat{M}_{t}}((\bar{C}_{j}^{(h)})^{*}).
\]
Then by applying Lemma \ref{lem:compenentsEqual} with $C_{j}^{(e)}=C_{j}$,
$C_{j}^{(g)}=u_{g}C_{j}u_{g}^{*}$, we have
\begin{align*}
\xi_{j}^{t} & =|G|^{1/2}t^{-1/2}E_{M_{t}}((\bar{C}_{j}^{(h)})^{*})
\end{align*}
It follows that
\begin{align*}
\langle t^{-1/2}\zeta_{j}^{t},(\bar{C}_{j}^{(h)})^{*}\rangle & =\langle E_{\hat{M}_{t}}(C_{j}^{*}),(\bar{C}_{j}^{(h)})^{*}\rangle\\
 & =\langle C_{j}^{*},E_{\hat{M}_{t}}(\bar{C}_{j}^{(h)})^{*}\rangle\\
 & =\langle C_{j}^{*},E_{\hat{M}_{t}}(|G|^{-1}\sum_{g\in G}\langle h,g\rangle u_{g}(\bar{C}_{j}^{(h)})^{*}u_{g}^{*})\rangle\\
 & =\langle C_{j}^{*},E_{\hat{M}_{t}}(|G|^{-1}\sum_{g\in G}\langle h,g\rangle(u_{g}+\sqrt{t}C'_{g})(\bar{C}_{j}^{(h)})^{*}(u_{g}^{*}+\sqrt{t}C'_{g})\rangle+O(t^{1/2})\\
 & =\langle C_{j}^{*},|G|^{-1}\sum_{g\in G}\langle h,g\rangle(u_{g}+\sqrt{t}C'_{g})E_{\hat{M}_{t}}((\bar{C}_{j}^{(h)})^{*})(u_{g}^{*}+\sqrt{t}C'_{g})\rangle+O(t^{1/2})\\
 & =\langle C_{j}^{*},|G|^{-1}\sum_{g\in G}\langle h,g\rangle(u_{g})E_{\hat{M}_{t}}((\bar{C}_{j}^{(h)})^{*})(u_{g}^{*})\rangle+O(t^{1/2})\\
 & =\langle|G|^{-1}\sum_{g\in G}\langle g,h\rangle u_{g^{-1}}C_{j}^{*}u_{g},E_{\hat{M_{t}}}((\bar{C}_{j}^{(h)})^{*}\rangle+O(t^{1/2})\\
 & =\langle(\bar{C}_{j}^{(h)})^{*},E_{\hat{M_{t}}}((\bar{C}_{j}^{(h)})^{*}\rangle+O(t^{1/2})\\
 & =\Vert E_{\hat{M_{t}}}((\bar{C}_{j}^{(h)})^{*})\Vert_{2}^{2}+O(t^{1/2})\\
 & \geq\Vert E_{M_{t}}((\bar{C}_{j}^{(h)})^{*})\Vert_{2}^{2}+O(t^{1/2})\\
 & =t|G|^{-1}\Vert\xi_{j}^{t}\Vert_{2}^{2}+O(t^{1/2}).
\end{align*}

We now claim that for all by one $h\in\hat{G}$, $\langle t^{1/2}\zeta_{j}^{t},(\bar{C}_{j}^{(h)})^{*}\rangle$
is almost $2|G|^{-1}$. 

Let us use the notation $x(t)\in\hat{M}_{t}+O(t^{\gamma})$ to signify
that there exists an element $y(t)\in\hat{M}_{t}$ so that $\Vert x(t)-y(t)\Vert_{2}=O(t^{\gamma})$.
Then, since $(u_{g}+\sqrt{t}C'_{g})(Y_{j}+\sqrt{t}C_{j})(u_{g}+\sqrt{t}C'_{g})^{*}\in\hat{M}_{t}$
and
\begin{align*}
(u_{g}+\sqrt{t}C'_{g})(Y_{j}+\sqrt{t}C_{j})(u_{g}+\sqrt{t}C'_{g})^{*} & =u_{g}Y_{j}u_{g}^{*}+\sqrt{t}(C_{g}'Y_{j}u_{g}^{*}+u_{g}Y_{j}(C_{g}')^{*}+u_{g}C_{j}u_{g}^{*})+O(t)\\
 & =\omega_{j}(g)Y_{j}+\sqrt{t}(C_{g}'Y_{j}u_{g}^{*}+u_{g}Y_{j}(C_{g}')^{*}+u_{g}C_{j}u_{g}^{*})+O(t),
\end{align*}
we deduce that
\[
(u_{g}+\sqrt{t}C'_{g})(Y_{j}+\sqrt{t}C_{j})(u_{g}+\sqrt{t}C'_{g})^{*}-\omega_{j}(g)(Y_{j}+\sqrt{t}C_{j})\in\hat{M}_{t}+O(t).
\]
Thus also
\[
C_{g}'Y_{j}u_{g}^{*}+u_{g}Y_{j}(C_{g}')^{*}-\omega_{j}(g)C_{j}+u_{g}C_{j}u_{g}^{*}\in\hat{M}_{t}+O(t).
\]
Hence
\[
C_{g}'Y_{j}u_{g}^{*}+u_{g}Y_{j}(C_{g}')^{*}-\omega_{j}(g)C_{j}+u_{g}C_{j}u_{g}^{*}\in\hat{M}_{t}+O(t^{1/2}).
\]

Since $u_{g}^{*}u_{g}=1$, we similarly deduce
\[
C_{g}'u_{g}^{*}+u_{g}(C_{g}')^{*}\in\hat{M}_{t}+O(t^{1/2}),
\]
so that, noting that $u_{g}\in\hat{M}_{t}+O(t^{1/2})$
\[
(C_{g}')^{*}+u_{g}^{*}C_{g}'u_{g}^{*}\in\hat{M}_{t}+O(t^{1/2}).
\]
This gives
\[
C'_{g}Y_{j}u_{g}^{*}-u_{g}Y_{j}u_{g}^{*}C_{g}'u_{g}^{*}-\omega_{j}(g)C_{j}+u_{g}C_{j}u_{g}^{*}\in\hat{M}_{t}+O(t^{1/2}).
\]

Projecting onto eigenspaces for the $G$ action (noting again that
$u_{g}\in\hat{M}_{t}+O(t^{1/2})$) gives us
\[
(\bar{C}'){}_{g}^{(h\cdot\omega_{j})}Y_{j}u_{g}^{*}-u_{g}Y_{j}u_{g}^{*}\bar{C}_{g}'^{(h\cdot\omega_{j})}u_{g}^{*}-\omega_{j}(g)\bar{C}_{j}^{(h)}+u_{g}\bar{C}_{j}^{(h)}u_{g}^{*}\in M_{t}+O(t^{1/2}).
\]
We note also that $(\bar{C}_{g}')^{(h\cdot\omega_{j})}u_{g}^{*}=\langle h,g\rangle\omega_{j}(g)\cdot u_{g}^{*}(\bar{C}')_{g}^{(h)}$
and $u_{g}C_{j}^{(h)}u_{g}^{*}=\langle h,g\rangle C_{j}^{(h)}$ whence
\[
\bar{(C}')_{g}^{(h\cdot\omega_{j})}Y_{j}u_{g}^{*}-\langle h,g\rangle\omega_{j}(g)u_{g}Y_{j}u_{g}^{*}u_{g}^{*}\bar{C}_{g}'^{(h\cdot\omega_{j})}+(\langle h,g\rangle-\omega_{j}(g))\bar{C}_{j}^{(h)}\in M_{t}+O(t^{1/2}).
\]
It follows that
\begin{align*}
\langle t^{1/2}\zeta_{j}^{t} & ,(\bar{C}_{j}^{(h)})^{*}\rangle=\langle E_{\hat{M}_{t}}(C_{j}),\bar{C}_{j}^{(h)}\rangle\\
 & =\langle C_{j},E_{\hat{M}_{t}}(\bar{C}_{j}^{(h)})\rangle\\
 & =\sum_{h'\in\hat{G}}\langle\bar{C}_{j}^{(h')},E_{\hat{M}_{t}}(\bar{C}_{j}^{(h)})\rangle\\
 & =\langle\bar{C}_{j}^{(h)},E_{\hat{M}_{t}}(\bar{C}_{j}^{(h)})\rangle+O(t^{1/2})\\
 & =\Vert E_{\hat{M}_{t}}(\bar{C}_{j}^{(h)})\Vert_{2}^{2}+O(t^{1/2})\\
 & \geq\max_{g}\Bigg|\Bigg\langle\bar{C}_{j}^{(h)},\frac{(\bar{C}')_{g}^{(h\cdot\omega_{j})}Y_{j}u_{g}^{*}-\langle h,g\rangle\omega_{j}(g)u_{g}Y_{j}u_{g}^{*}u_{g}^{*}\bar{C}_{g}'^{(h\cdot\omega_{j})}+(\langle h,g\rangle-\omega_{j}(g))\bar{C}_{j}^{(h)}}{\Vert(\bar{C}')_{g}^{(h\cdot\omega_{j})}Y_{j}u_{g}^{*}-\langle h,g\rangle\omega_{j}(g)u_{g}Y_{j}u_{g}^{*}u_{g}^{*}\bar{C}_{g}'^{(h\cdot\omega_{j})}+(\langle h,g\rangle-\omega_{j}(g))\bar{C}_{j}^{(h)}\Vert}\Bigg\rangle\Bigg|^{2}\\
 & \qquad\qquad+O(t^{1/2})\\
 & =2|G|^{-1}\max_{g}\frac{|\omega_{j}(g)-\langle h,g\rangle|^{2}}{|\omega_{j}(g)-\langle h,g\rangle|^{2}+4\Vert Y_{j}\Vert^{2}}+O(t^{1/2}).
\end{align*}
Thus if $h\neq\omega_{j}$, we have
\[
\langle t^{1/2}\zeta_{j}^{t},(\bar{C}_{j}^{(h)})^{*}\rangle\geq\frac{2|G|^{-1}}{1+\kappa}+O(t^{2}),
\]
where
\[
\kappa=\sup_{j,h}\min_{g}\frac{4\Vert Y_{j}\Vert_{2}^{2}}{|\omega_{j}(g)-\langle h,g\rangle|^{2}}=C\sup_{j}\Vert Y_{j}\Vert_{2}^{2},
\]
where $C$ is some constant that only depends on $G$. 

We can now compute, using that $\zeta_{j}^{t}=t^{-1/2}E_{\hat{M}_{t}}(C_{j}^{*})$,
\begin{align*}
\limsup_{t\to0}t\sum_{j\in I}\Vert J(Y_{j}+\sqrt{t}C_{j}) & :(Y_{i}:i\in I\setminus\{j\})\cup(u_{g}+\sqrt{t}C_{g}':g\in G\})\Vert_{2}^{2}\\
= & \limsup_{t\to\infty}\langle t^{1/2}\zeta_{j}^{t},t^{1/2}\zeta_{j}^{t}\rangle=\limsup_{t\to\infty}\langle t^{1/2}\zeta_{j}^{t},C_{j}^{*}\rangle\\
= & \limsup_{t\to0}\sum_{h\in\hat{G}}\sum_{j\in I}\langle t^{1/2}\zeta_{j}^{t},(\bar{C}_{j}^{(h)})^{*}\rangle+O(t^{1/2})\\
\geq & 2|I|(|G|-1)|G|^{-1}(1+\kappa)^{-1}+\limsup_{t\to0}|G|^{-1}t\Vert\xi_{j}^{t}\Vert_{2}^{2}+O(t^{1/2})\\
\geq & 2|I|(|G|-1)|G|^{-1}(1+\kappa)^{-1}+2|I||G|^{-1}-|G|^{-1}\delta^{\star}(Y_{i}:i\in I)\\
= & 2|I|(1+\kappa)^{-1}-2|I||G|^{-1}(1+\kappa)^{-1}+2|I||G|^{-1}-|G|^{-1}\delta^{\star}(Y_{i}:i\in I)\\
= & 2|I|(1+\kappa)^{-1}-|G|^{-1}\delta^{\star}(Y_{i}:i\in I)+2I|G|^{-1}(1-(1+\kappa)^{-1}).
\end{align*}
Putting all this together gives us
\begin{align*}
\delta^{\star}((Y_{i} & :i\in I)\cup(u_{g}:g\in G)=2|I|+2|G|\\
 & \qquad-\limsup_{t\to0}\Bigg[\sum_{j\in I}\Vert J(Y_{j}+\sqrt{t}C_{j}):(Y_{i}:i\in I\setminus\{j\})\cup(u_{g}+\sqrt{t}C_{g}':g\in G\})\Vert_{2}^{2}\\
 & \qquad\qquad+\sum_{g\in G}\Vert J(u_{g}+\sqrt{t}C'_{g}:(Y_{i}:i\in I\setminus\{j\})\cup(u_{g'}+\sqrt{t}C_{g'}:g'\in G\setminus\{g\})\Vert_{2}^{2}\Bigg]\\
\leq & 2|I|+2|G|-2|I|(1+\kappa)^{-1}+|G|^{-1}\delta^{\star}(Y_{i}:i\in I)-2I|G|^{-1}(1-(1+\kappa)^{-1})-2|G|+1-|G|^{-1}\\
= & |G|^{-1}\delta^{\star}(Y_{i}:i\in I)+1-|G|^{-1}+2|I|(1-|G|^{-1})(1-(1+\kappa)^{-1}).
\end{align*}
Thus
\begin{align*}
\delta^{\star}((Y_{i} & :i\in I)\cup(u_{g}:g\in G)-1\leq|G|^{-1}\delta^{\star}(Y_{i}:i\in I)-|G|^{-1}+2|I|(1-|G|^{-1})(1-(1+\kappa)^{-1})\\
 & =|G|^{-1}(\delta^{\star}(Y_{i}:i\in I)-1)+2|I|(1-|G|^{-1})(1-(1+\kappa)^{-1}).
\end{align*}

Suppose now we rescale $Y_{i}$ by replacing $Y_{i}$ with $\lambda Y_{i}$.
Then
\[
\delta^{\star}(Y_{i}:i\in I)=\delta^{\star}(\lambda Y_{i}:i\in I)
\]
 so that we get 
\begin{align*}
\delta^{\star}((\lambda Y_{i} & :i\in I)\cup(u_{g}:g\in G)-1\\
 & \leq|G|^{-1}(\delta^{\star}(Y_{i}:i\in I)-1)+2|I|(1-|G|^{-1})(1-(1+\lambda^{2}\kappa)^{-1}).
\end{align*}
Thus choosing $\lambda$ small enough we can ensure that
\[
\delta^{\star}((\lambda Y_{i}:i\in I)\cup(u_{g}:g\in G)-1\leq|G|^{-1}(\delta^{\star}(Y_{i}:i\in I)-1)+\epsilon
\]
as claimed.
\end{proof}
Let $Z_{1},\dots,Z_{n}$ be generators of a tracial von Neumann algebra,
it would be natural to expect that $\delta^{\star}(Z_{1},\dots,Z_{n})$
is an algebraic invariant: if $Z_{1}',\dots,Z_{n'}'$ is another set
of generators for the (non-closed) algebra $*\operatorname{-alg}(Z_{1},\dots,Z_{n})$,
then $\delta^{\star}(Z_{1},\dots,Z_{n})=\delta^{\star}(Z'_{1},\dots,Z'_{n'})$.
In particular, one expects that for any nonzero numbers $\lambda_{1},\dots,\lambda_{n}$,
\begin{equation}
\delta^{\star}(Z_{1},\dots,Z_{n})=\delta^{\star}(\lambda_{1}Z_{1},\dots,\lambda_{n}Z_{n}).\label{eq:lambdaInvariance}
\end{equation}
If this were true, then we could combine the inequality in Theorem
\ref{thm:nonmicroInequality} with the equality $\delta^{\star}((\lambda Y_{i}:i\in I)\cup(u_{g}:g\in G))=\delta^{\star}((Y_{i}:i\in I)\cup(u_{g}:g\in G))$
to deduce that $\delta^{\star}((Y_{i}:i\in I)\cup(u_{g}:g\in G))-1\leq|G|^{-1}(\delta^{\star}(Y_{i}:i\in I)-1)+\epsilon$
for all $\epsilon>0$, and conclude that $\delta^{\star}((Y_{i}:i\in I)\cup(u_{g}:g\in G))-1\leq|G|^{-1}(\delta^{\star}(Y_{i}:i\in I)-1)$.
However, to our embarrassment, we could not find a proof of (\ref{eq:lambdaInvariance}).
Note, however, that when $*\operatorname{-alg}(Z_{1},\dots,Z_{n})$
is isomorphic to a group algebra, then algebraic invariance holds
\cite{shlyakht-mineyev:freedim}.

The difficulty in the proof of Theorem \ref{thm:nonmicroInequality}
arises from the complicated form that the relation $u_{g}^{*}Y_{i}u_{g}=\omega_{i}(g)Y_{i}$
takes when we substitute $u_{g}+\sqrt{t}C'_{g}$ for $u_{g}$ and
$Y_{i}+\sqrt{t}C_{i}$ for $Y_{i}$. If we instead redefine $\hat{M}_{t}$
as $W^{*}(M_{t},u_{g}:g\in G)$ and set $\zeta_{j}^{t}=t^{-1/2}E_{\hat{M}_{t}}(C_{j})$,
then it is easy to show that $\Vert\bar{\zeta}_{j}^{t,(h)}\Vert_{2}^{2}=2t^{-1}$
if $h\neq\omega_{i}$. Indeed, since now $u_{g}\in\hat{M}_{t}$ we
see that
\[
\hat{M}_{t}\ni u_{g}(Y_{i}+\sqrt{t}C_{g})u_{g}^{*}-\omega_{i}(g)(Y_{i}+\sqrt{t}C_{g})=\sqrt{t}(u_{g}C_{g}u_{g}^{*}-\omega_{i}(g)C_{g}),
\]
so that $u_{g}C_{g}u_{g}^{*}-\omega_{i}(g)C_{g}\in\hat{M}_{t}$. Decomposing
into orthogonal components according to $h\in\hat{G}$ then gives
that $\langle h,g\rangle-\omega_{j}(g))\bar{C}_{j}^{(h)}\in\hat{M}_{t}$
so that $\bar{C}_{j}^{(h)}\in\hat{M}_{t}$ whenever $h\neq\omega_{j}$;
thus $\bar{\zeta}_{j}^{t,h}=t^{-1/2}E_{\hat{M}_{t}}(\bar{C}_{j}^{(h)})=t^{-1/2}\bar{C}_{j}^{(h)}$
and the claimed equality on the norm follows. Using this we can prove
the following inequality:
\[
\delta^{\star}((Y_{i}:i\in I)\ |\ (u_{g}:g\in G))+(1-|G|^{-1})\leq|G|^{-1}(\delta^{\star}(Y_{i}:i\in I)-1)
\]
where we define a kind of ``relative non-microstates free entropy
dimension''  $\delta^{\star}((Y_{i}:i\in I)\ |\ (u_{g}:g\in G))=2|I|-\limsup_{t}t\sum_{i}\Vert J(Y_{k}:(Y_{j}:j\in I\setminus\{i\})\cup(u_{g}:g\in G\})\Vert_{2}^{2}$.
We summarize this as:
\begin{rem}
Let $X$ be an arbitrary generating set for $M^{G}$, and let $X\cup(\hat{u}_{g})_{g\in\hat{G}}$
be the generating set for $M$ and $X\cup(\hat{u}_{g})_{g\in\hat{G}}\cup(u_{g})_{g\in G}$
be the generating set for $M\rtimes_{\alpha}G$ as constructed in
$\S$\ref{subsec:SpecialGenerators}. Then
\[
\delta^{\star}((Y_{i}:i\in I)\ |\ (u_{g}:g\in G))+(1-|G|^{-1})\leq|G|^{-1}(\delta^{\star}(Y_{i}:i\in I)-1).
\]
\end{rem}

\section{Some Applications.}
\begin{cor}
\label{cor:InequaltitySubfactor}Suppose that $G$ is a finite abelian
group acting properly outer on a factor $M$. Suppose that $M^{G}$
is generated by $d$ elements. Then $M\rtimes_{\alpha}G$ has a generating
set $S$ satisfying $\delta_{0}(S)\leq(2d+2)|G|^{-1}+1$.
\end{cor}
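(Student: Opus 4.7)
The plan is to derive the bound on $\delta_{0}$ by combining Theorem~\ref{thm:nonmicroInequality} with the standard comparison $\delta_{0}\leq\delta^{\star}$ between microstates and non-microstates free entropy dimension, together with a simple upper bound on $\delta^{\star}$ of the intermediate generating set of $M$. Concretely, I fix a generating set $X=(X_{1},\ldots,X_{d})$ of $M^{G}$ and produce the generating sets $X\cup(\hat{u}_{h})_{h\in\hat{G}}$ for $M$ and $X\cup(\hat{u}_{h})_{h\in\hat{G}}\cup(u_{g})_{g\in G}$ for $M\rtimes_{\alpha}G$ exactly as in Subsection~\ref{subsec:SpecialGenerators}. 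For any $\epsilon>0$, Theorem~\ref{thm:nonmicroInequality} supplies $\lambda>0$ such that the rescaled generating set $S=(\lambda X_{i})_{i}\cup(\lambda\hat{u}_{h})_{h}\cup(u_{g})_{g}$ of $M\rtimes_{\alpha}G$ satisfies
\[
\delta^{\star}(S)\leq 1+|G|^{-1}\bigl(\delta^{\star}(X_{1},\ldots,X_{d},\hat{u}_{h}:h\in\hat{G})-1\bigr)+\epsilon.
\]

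Next I bound the $\delta^{\star}$ on the right-hand side by discarding irrelevant contributions. The unitaries $(\hat{u}_{h})_{h\in\hat{G}}$ generate a copy of the finite-dimensional group algebra $L(\hat{G})$, so from \cite{shlyakht-mineyev:freedim} (as recalled inside the proof of Theorem~\ref{thm:nonmicroInequality}) one has $\delta^{\star}(\hat{u}_{h}:h\in\hat{G})=\beta_{1}^{(2)}(\hat{G})-\beta_{0}^{(2)}(\hat{G})+1=1-|G|^{-1}$. Since the conjugate variable norm $\|J(Y:B)\|_{2}$ is monotone nondecreasing in the conditioning subalgebra $B$, one has $\|J(\hat{u}_{h}:(\hat{u}_{h'})_{h'\neq h},X_{1},\ldots,X_{d})\|_{2}\geq\|J(\hat{u}_{h}:(\hat{u}_{h'})_{h'\neq h})\|_{2}$ after the standard circular perturbation. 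Dropping the non-negative $X_{i}$-contributions to the defining sum of $\delta^{\star}$ and using the above yields
\[
\delta^{\star}(X_{1},\ldots,X_{d},\hat{u}_{h}:h\in\hat{G})\leq 2d+\delta^{\star}(\hat{u}_{h}:h\in\hat{G})=2d+1-|G|^{-1}.
\]

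Substituting this into the previous inequality gives $\delta^{\star}(S)\leq 1+2d|G|^{-1}-|G|^{-2}+\epsilon$. Choosing $\epsilon$ sufficiently small (for instance $\epsilon\leq 2|G|^{-1}$) and finally invoking $\delta_{0}(S)\leq\delta^{\star}(S)$ yields $\delta_{0}(S)\leq 1+(2d+2)|G|^{-1}$, as required.

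The only step that needs real care is the subadditivity-style bound $\delta^{\star}(X,\hat{u})\leq 2d+1-|G|^{-1}$; this is a folklore consequence of the projection-onto-$L^{2}$ characterization of the conjugate variable and the monotonicity of its norm in the conditioning subalgebra, but it does require writing out the perturbation-monotonicity argument once. The remaining ingredients---Theorem~\ref{thm:nonmicroInequality}, the $L^{2}$-Betti number computation for the finite abelian group $\hat{G}$, and the comparison $\delta_{0}\leq\delta^{\star}$---are all available off the shelf.
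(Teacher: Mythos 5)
Your proposal is correct and follows essentially the same route as the paper: apply Theorem \ref{thm:nonmicroInequality}, bound $\delta^{\star}(X\cup(\hat{u}_{h})_{h\in\hat{G}})\leq 2d+1-|G|^{-1}$ using the Mineyev--Shlyakhtenko computation for the finite abelian group $\hat{G}$, and conclude via $\delta_{0}\leq\delta^{\star}$ with a suitable choice of $\epsilon$. The only cosmetic differences are that you take the rescaled set $(\lambda X_{i})\cup(\lambda\hat{u}_{h})\cup(u_{g})$ itself as $S$ (so you do not need the paper's appeal to invariance of $\delta_{0}$ under algebraic changes of variables) and that you justify the $2d+1-|G|^{-1}$ bound by conjugate-variable monotonicity in the conditioning algebra rather than by quoting subadditivity, which is the same mechanism the paper uses inside the proof of Theorem \ref{thm:nonmicroInequality}.
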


\begin{proof}
If $|G|=1$, there is nothing to prove, so let us assume that $|G|\geq2$.
By Theorem \ref{thm:nonmicroInequality} we have, for every $\epsilon>0$
the existence of $\lambda>0$ so that
\[
\delta^{\star}(\lambda X\cup(\lambda\hat{u}_{g})_{g\in\hat{G}}\cup(u_{g})_{g\in G})-1\leq|G|^{-1}\cdot\left[\delta^{\star}(X\cup(\hat{u}_{h})_{h\in\hat{G}})-1\right]+\epsilon.
\]
where $X$ is any generating set for $M^{G}$. We can thus assume
that $|X|\leq d$. 
\begin{align*}
\delta^{\star}(X\cup(\hat{u}_{h})_{h\in\hat{G}}) & \leq\delta^{\star}(X)+\delta^{\star}((\hat{u}_{h})_{h\in\hat{G}})\\
 & \leq2d+1-|\hat{G}|^{-1}.
\end{align*}
since $\hat{G}$ is abelian \cite{shlyakht-mineyev:freedim}. 

Let $\epsilon=|\hat{G}|^{-1}$. Then by Theorem \ref{thm:nonmicroInequality}
there exists some $\lambda>0$ so that if we set $S=X\cup(\hat{u}_{g})_{g\in\hat{G}}\cup(u_{g})_{g\in G}$
so that (\ref{eq:mainInequality}) holds. Combining this with the
remarkable inequality between microstates and non-microstates free
entropy \cite{guionnet-biane-capitaine:largedeviations} and invariance
of $\delta_{0}$ under algebraic changes of variables \cite{dvv:improvedrandom}
we obtain:
\begin{align*}
\delta_{0}(S) & =\delta_{0}(\lambda X\cup(\lambda\hat{u}_{g})_{g\in\hat{G}}\cup(u_{g})_{g\in G}))\\
 & \leq\delta^{\star}(\lambda X\cup(\lambda\hat{u}_{g})_{g\in\hat{G}}\cup(u_{g})_{g\in G}))\\
 & \leq|G|^{-1}(2d+1-|\hat{G}|^{-1})+1+\epsilon=2d|G|^{-1}+|G|^{-1}-|G|^{-2}+1+|G|^{-1}\\
 & \leq(2d+2)|G|^{-1}+1=(2d+2)|G|^{-1}+1.
\end{align*}
as claimed.
\end{proof}
\begin{thm}
Let $M$ be a finitely generated factor, and assume that $M\cong M_{2\times2}(M)$
and that  $\alpha$ is a properly outer action of $G=(\mathbb{Z}/2\mathbb{Z})^{\oplus\infty}$
on $M$. Then for every $\epsilon>0$ there exists a finite generating
set $S$ for $M\rtimes_{\alpha}G$ so that $\delta_{0}(S)\leq1+\epsilon$. 
\end{thm}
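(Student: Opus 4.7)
Fix $\epsilon>0$, and decompose $G = G_n\oplus G^{(n)}$ with $G_n\cong(\mathbb{Z}/2\mathbb{Z})^n$ and $G^{(n)}$ another copy of $(\mathbb{Z}/2\mathbb{Z})^{\oplus\infty}$. Setting $\tilde M_n := M\rtimes_\alpha G^{(n)}$, we have $M\rtimes_\alpha G = \tilde M_n\rtimes_\alpha G_n$, and since $G$ is abelian the $G_n$-action on $\tilde M_n$ fixes each $u_h$ for $h\in G^{(n)}$, so $\tilde M_n^{G_n} = M^{G_n}\rtimes_\alpha G^{(n)}$. The plan is to apply Corollary~\ref{cor:InequaltitySubfactor} to the finite abelian crossed product $\tilde M_n\rtimes_\alpha G_n$, which produces a finite generating set $S$ of $M\rtimes_\alpha G$ satisfying
\[
\delta_0(S)\;\leq\;(2d_n+2)\,|G_n|^{-1}+1,
\]
where $d_n$ is the minimal number of generators of $\tilde M_n^{G_n}$. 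Since $|G_n|=2^n$, it suffices to exhibit, for each $n$, such a generating set of size $d_n = O(n)$.

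The key observation is that the group von Neumann subalgebra $L(G^{(n)})\subset\tilde M_n^{G_n}$ is singly generated. Indeed, $G^{(n)}$ is a countable discrete abelian group, so by Pontryagin duality $L(G^{(n)})\cong L^\infty(\hat G^{(n)})$ with $\hat G^{(n)}\cong\{\pm 1\}^{\mathbb N}$ equipped with its Haar measure; this is measure-isomorphic to $L^\infty([0,1])$, which is singly generated. Enumerating a generating set $\{\tau_j\}_{j\geq 1}$ of $G^{(n)}$, an explicit norm-convergent self-adjoint generator is
\[
z\;=\;\sum_{j\geq 1} 2^{-j}\,u_{\tau_j},
\]
which under the duality corresponds (up to an affine transformation) to the binary-expansion map $\{\pm 1\}^{\mathbb N}\to[-1,1]$ and is therefore a generator of $L(G^{(n)})$. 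Hence $\tilde M_n^{G_n} = W^*(M^{G_n}, z)$, giving $d_n \leq 1 + (\text{number of generators of }M^{G_n})$. The hypothesis $M\cong M_{2\times 2}(M)$ now enters to bound the generators of $M^{G_n}$: since $M$ is McDuff and $\alpha|_{G_n}$ is a properly outer action of the finite abelian group $G_n$, the classification of such actions on McDuff factors (in the Ocneanu--Izumi spirit) allows us to assume, up to cocycle conjugacy, that $\alpha|_{G_n}$ has the Rokhlin property, under which $M^{G_n}\cong M^{G_n}\otimes M_{|G_n|}\cong M\rtimes_\alpha G_n$; the last algebra is at most $(k+n)$-generated, where $k$ is a fixed generator count of $M$, and so $d_n \leq k+n+1$.

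The main obstacle is justifying the $M_{|G_n|}$-absorption $M^{G_n}\cong M\rtimes_\alpha G_n$, which is equivalent to a Rokhlin-type property for the outer action $\alpha|_{G_n}$ on the McDuff factor $M$. It is precisely here that the hypothesis $M\cong M_{2\times 2}(M)$ is used essentially, and this step rests on invoking (or suitably adapting) the classification of outer actions of finite abelian groups on McDuff factors. Once this stability is established, one concludes
\[
\delta_0(S)\;\leq\;\frac{2(k+n+1)+2}{2^n}+1,
\]
and choosing $n$ large enough so that $(2k+2n+4)/2^n\leq\epsilon$ yields $\delta_0(S)\leq 1+\epsilon$, as required.
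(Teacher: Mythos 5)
There is a genuine gap, and it sits exactly where you place the weight of the hypothesis $M\cong M_{2\times2}(M)$. Your argument needs $M$ to be McDuff and the outer action of $G_n$ to be (cocycle conjugate to) a Rokhlin action, and neither is available: $M\cong M_{2\times2}(M)$ only says $M$ is isomorphic to its $2$-fold amplification, which is far weaker than McDuffness (for instance $L(\mathbb{F}_\infty)$ has fundamental group $\mathbb{R}_{>0}$, so it satisfies the hypothesis, but it has no property Gamma and is not McDuff); and even on honestly McDuff factors the Ocneanu--Jones type classification of outer finite (abelian) group actions, and the attendant Rokhlin property, is a theorem about the hyperfinite II$_1$ factor and is not available for general factors. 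So the step ``$M^{G_n}\cong M\rtimes_\alpha G_n$, hence $d_n\leq k+n+1$'' --- the only place the hypothesis enters, and the one you yourself flag as the main obstacle --- is unsupported as written.

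The needed generator bound has an elementary proof, and it is precisely how the paper uses the hypothesis. By Takesaki duality as in Section \ref{subsec:SpecialGenerators}, $M\cong M^{G_n}\rtimes_{\alpha'}\hat{G}_n$, and applying the duality once more one gets $M^{G_n}\cong N\rtimes G_n$ for a II$_1$ factor $N$ with $M_{2^n\times2^n}(N)\cong M$. Since $M\cong M_{2\times2}(M)$ iterates to $M\cong M_{2^n\times2^n}(M)$, and corners of equal trace in a II$_1$ factor are isomorphic, $N\cong M$; hence $M^{G_n}$ is generated by at most $d+n$ elements (generators of $N$ plus the $n$ group unitaries), with no classification theory involved. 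With that substitution your architecture does work, modulo the routine check that $G_n$ acts properly outerly on the factor $\tilde M_n=M\rtimes_\alpha G^{(n)}$ (which follows from $M'\cap(M\rtimes_\alpha G)=\mathbb{C}$), and it is then a mild variant of the paper's proof: the paper applies Corollary \ref{cor:InequaltitySubfactor} to $M\rtimes_\alpha G_m$ and afterwards adjoins a single unitary generating $L(G)$, paying with Jung's hyperfinite inequality and $\delta_0(L(G_m))=1-2^{-m}$, whereas you absorb the infinite part $G^{(n)}$ into the coefficient algebra from the start, using only that $L(G^{(n)})$ is singly generated; both routes give $\delta_0(S)\leq 1+O\bigl((d+n)2^{-n}\bigr)$ and hence $1+\epsilon$.
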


\begin{proof}
Suppose that $M$ is generated by $d$ elements. Choose $m$ so that
$2^{-m}(2d+2m+2)<\epsilon$. 

Denote by $G_{m}$ the subgroup of $G$ generated by first $m$ copies
of $\mathbb{Z}/2\mathbb{Z}$. Then $M^{G_{m}}\cong N\rtimes G_{m}$
where $N$ is a II$_{1}$ factor so that $M_{2^{m}\times2^{m}}(N)\cong M$.
Thus by assumption $N\cong M$, so $N$ can be generated by $d$ elements.
Thus $M^{G_{m}}$ is generated a set $S'$ of at most $d+m$ elements.
Applying now Corollary \ref{cor:InequaltitySubfactor} we deduce that
there exists a generating set $S_{1}$ for $M\rtimes G_{m}$ so that
$\delta_{0}(S_{1})\leq2^{-m}(2d+2m+2)+1$; moreover, $S_{1}$ includes
the set $S_{0}$ consisting of generators of $G_{m}$. 

$G$ is an infinite abelian group whose topological dual is isomorphic
to the Cantor set. Thus the von Neumann algebra of $G$ is generated
by a single unitary $w$. Let $S_{2}$ be that unitary. Then $S=S_{0}\cup S_{1}\cup S_{2}$
is a generating set for $M\rtimes_{\alpha}G$. 

By the hyperfinite inequality for $\delta_{0}$ \cite{jung:dimHyperfinite},
we have

\begin{align*}
\delta_{0}(S) & =\delta_{0}(S_{1}\cup S_{2}\cup S_{0})\leq\delta_{0}(S_{1}\cup S_{0})+\delta_{0}(S_{2}\cup S_{0})-\delta_{0}(S_{0})\\
 & \leq2^{-m}(2d+2m+2)+1+1-\delta(S_{0}).
\end{align*}
Since $S_{0}$ generates $G_{m}$ which is a finite abelian group,
$\delta_{0}(S_{0})=1-|G_{m}|^{-1}=1-2^{-m}$ \cite{jung:dimHyperfinite,dvv:entropy3}.
Substituting this into the inequality above gives
\[
\delta_{0}(S)\leq2^{-m}(2d+2m+2)+1-(1-2^{-m})=2^{-m}(2d+2m+2)+1<1+\epsilon
\]
as claimed.
\end{proof}
\bibliographystyle{alpha}

\end{document}